\newcommand{\al}{\alpha}
\newcommand{\be}{\beta}
\newcommand{\la}{\lambda}
\newcommand{\ity}{\infty}
\newcommand{\C}{\mathbb{C}}
\newcommand{\N}{\mathbb{N}}
\newcommand{\Z}{\mathbb{Z}}
\newcommand{\B}{\Big}
\numberwithin{equation}{section}
\newtheorem{theorem}{Theorem}[section]
\newtheorem{lemma}[theorem]{Lemma}
\newtheorem{proposition}[theorem]{Proposition}
\newtheorem{corollary}[theorem]{Corollary}
\theoremstyle{remark}
\newtheorem{remark}[theorem]{Remark}
\newtheorem{example}[theorem]{Example}
\begin{document}
\title[escaping set of an entire function]{ results on escaping set of an entire function and its composition}

%

\author[R. Kaur]{Ramanpreet Kaur}
\address{Department of Mathematics, University of Delhi,
Delhi--110 007, India}

\email{preetmaan444@gmail.com}

\author[D. Kumar]{Dinesh Kumar}
\address{Department of Mathematics, Deen Dayal Upadhyaya College, University of Delhi,
Delhi--110 078, India}

\email{dinukumar680@gmail.com }

%
%

\begin{abstract}
Given two permutable entire functions $f$ and $g,$ we establish vital relationship between escaping sets of entire functions $f, g$ and their composition. We provide some families of transcendental entire functions for which Eremenko's conjecture holds. In addition, we investigate the dynamical properties of the mapping $f(z)=z+1+e^{-z}.$
\end{abstract}

\keywords{Escaping set, normal family, postsingular set, postsingularly bounded, hyperbolic}

\subjclass[2010]{37F10, 30D05}

\maketitle
\section{introduction}\label{sec1}

 Let $f$ be a transcendental entire function. For $n\in\N$ let $f^n$ denote the $n$-th iterate of $f.$ 

The set $F(f)=\{z\in\C : \{f^n\}_{n\in\N}\text{ is normal in some neighborhood of }z\}$
is called the Fatou set of $f$ or the set of normality of $f.$ Its complement $J(f)$ is the Julia set of $f$.
 For an introduction to the basic properties of these sets the reader can refer to \cite{bergweiler}.  The escaping set of $f$ denoted by $I(f)$ is the collection of those  points in the complex plane that tend to infinity under iteration of $f$. In general, it is neither an open nor a closed subset of $\C$ and has interesting topological properties.  The escaping set of a transcendental entire function $f$ was studied for the first time by Eremenko ~\cite {e1} who established that
\begin{enumerate}
\item\ $I(f) \neq\emptyset;$
\item\ $J(f)=\partial I(f);$
\item\ $I(f)\cap J(f)\neq\emptyset;$
\item\ $\overline{I(f)}$ has no bounded components.
\end{enumerate}
In the same paper he stated the following conjectures:
\begin{enumerate}
\item[(i)] Every component of $I(f)$ is unbounded;
\item[(ii)] Every point of $I(f)$ can be connected to $\ity$ by a curve consisting of escaping points.
\end{enumerate}

For the exponential maps of the form $f(z)=e^z+\la$ with $\la>-1,$ it is known, by Rempe \cite{R2}, that the escaping set is a connected subset of the plane, and for $\la<-1,$ it is the disjoint union of uncountably many curves to infinity, each of which is connected component of $I(f)$ \cite{R4}. 
In ~\cite{SZ} it was shown that every escaping point of every exponential map can be connected to $\ity$ by a curve consisting of escaping points.


Recall that $w\in\C$ is a critical value of a transcendental entire function $f$ if there exist some $w_0\in\C$ with $f(w_0)=w$ and $f'(w_0)=0.$ Here $w_0$ is called a critical point of $f.$ The image of a critical point of $f$ is  called critical value of $f.$ Also  $\xi\in\C$ is an asymptotic value of a transcendental entire function $f$ if there exist a curve $\Gamma$ tending to infinity such that $f(z)\to \xi$ as $z\to\ity$ along $\Gamma.$ The set of critical values and asymptotic values of $f$ will be denoted by $CV(f)$ and $AV(f)$ respectively.

For the Eremenko-Lyubich class \[\mathcal{B}=\{f:\C\to\C\,\,\text{transcendental entire}: \text{Sing}{(f^{-1})}\,\text{is bounded}\}\]
\,(where Sing($f^{-1}$) is the set of critical values and asymptotic values of $f$ and their finite limit points), it is well known that $I(f)$ is a subset of the Julia set $J(f),$  \cite{EL}. Each $f\in\mathcal B$ is said to be of bounded type. Also  if $f$ and $g$ are of bounded type, then so is $f\circ g$ \cite{berg2}, i.e. functions in $\mathcal B$ are closed under taking composition. 

A set $W$ is forward invariant under $f$ if $f(W)\subset W$  and $W$ is backward invariant under $f$ if $f^{-1}(W)=\{w\in\C:f(w)\in W\}\subset W.$ Moreover, $W$ is called completely invariant under $f$ if it is both forward and backward invariant under $f.$ We will be dealing mostly with permutable entire functions where 
two functions $f$ and $g$ are called permutable or commuting if $f\circ g=g\circ f.$ 

Recently, the dynamics of composite of two or more  complex functions have been studied by many authors.
 The seminal work in this direction was done by Hinkkanen and Martin \cite{martin} related to semigroups of rational functions. In their  papers, they extended the classical theory of the dynamics  associated to the iteration of a rational function of one complex variable to a more general setting of an arbitrary semigroup of rational functions. Several results were extended to semigroups of transcendental entire functions in \cite{{cheng}, {dinesh1}, {dinesh5}, {dinesh6}, {poon1}, {zhigang}}. The second author \cite{dinesh5} established several classes of transcendental semigroups for which Eremenko's conjecture holds.

In this article, we have investigated relationship between escaping sets of two permutable entire functions. To our best knowledge, it has not been studied so far what can be said about relation between escaping set of entire functions $f, g$ and their compositions. Our aim in this paper is to tackle this problem and establish several such relationships. We have shown that postsingularly bounded entire functions and hyperbolic entire functions (to be defined in Section 2) are closed under taking compositions. We provide families of transcendental entire functions for which Eremenko's conjecture holds. In addition, we investigate some dynamical properties of the mapping $f(z)=z+1+e^{-z}.$

\section{results}\label{sec2}

In this section, we establish several relationship between escaping sets of two permutable entire functions $f, g$ and their compositions.
We  first establish an important relation between $I(f), I(g)$ and $I(f\circ g)$ for two permutable entire functions $f$ and $g.$

\begin{theorem}\label{sec2,thm4}
Let $f$ and $g$ be   transcendental  entire functions satisfying $f\circ g=g\circ f.$ Then
\begin{enumerate}
\item [(i)] $I(f\circ g)$ is completely invariant under $f$ and $g$;
\item [(ii)] $I(f\circ g)\subset I(f)\cup I(g);$
\item [(iii)] For any two positive integers $i$ and $j,$ $I(f^i\circ g^j)\subset I(f)\cup I(g);$
\item [(iv)] For any two positive integers $i$ and $j,$ $I(f^i\circ g^j)\subset I(f\circ g)\).
\end{enumerate}
\end{theorem}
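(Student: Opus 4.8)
The plan is to base everything on the single algebraic identity that permutability provides: for commuting entire functions one has $(f\circ g)^n = f^n\circ g^n = g^n\circ f^n$ for every $n\in\N$, proved by a one-line induction. Writing $h=f\circ g$, this identity lets me pass freely between the orbit of $h$ and the separate orbits of $f$ and $g$, and the recurring technical device throughout will be the elementary (and only ``safe'') fact that a continuous map sends bounded sequences to bounded sequences --- whereas the reverse implication $w_n\to\infty\Rightarrow\phi(w_n)\to\infty$ fails for transcendental entire $\phi$ (take $\phi(z)=e^z$ with $w_n=-n$, so $|w_n|\to\infty$ but $\phi(w_n)\to 0$). Every genuinely delicate point below is exactly a place where one is tempted to use this false implication.

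For (i) I would first record the standard fact that $I(h)$ is completely invariant under $h$ itself, since $h^n(z)\to\infty$ if and only if $h^{n+1}(z)\to\infty$. To get invariance under $f$, I use that $f$ commutes with $h$ (because it commutes with $g$), so $h^n\circ f=f\circ h^n$. For backward invariance, if $f(z)\in I(h)$ then $f(h^n(z))=h^n(f(z))\to\infty$; were $\{h^{n_k}(z)\}$ bounded along some subsequence, continuity of $f$ would keep $\{f(h^{n_k}(z))\}$ bounded, a contradiction, so $h^n(z)\to\infty$. For forward invariance the clean trick is to apply the \emph{partner} map: if $z\in I(h)$ but $h^{n_k}(f(z))=f(h^{n_k}(z))$ were bounded, then applying $g$ gives $g(f(h^{n_k}(z)))=h(h^{n_k}(z))=h^{n_k+1}(z)$, which must tend to $\infty$, yet $g$ sends the bounded set $\{f(h^{n_k}(z))\}$ to a bounded set --- contradiction. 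The same two arguments with the roles of $f$ and $g$ interchanged give complete invariance under $g$.

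For (ii) I would start from $f^n(g^n(z))=h^n(z)\to\infty$ and split into cases: if $g^n(z)\to\infty$ then $z\in I(g)$ and we are done, so the real content is to show that if $\{g^n(z)\}$ has a bounded subsequence $\{g^{n_k}(z)\}$ then $z\in I(f)$. This is the step I expect to be the main obstacle, precisely because $f^{n_k}$ applied to the bounded points $g^{n_k}(z)$ escapes, and one must convert this into escape of the orbit $f^n(z)$ of the \emph{fixed} starting point $z$; here the forbidden implication is lurking, so the argument must instead route the boundedness through the partner map and the complete invariance already established in (i), rather than transferring escape directly. Once (ii) is available, I would obtain (iii) by applying (ii) to the commuting pair $f^i,g^j$ together with $I(f^i)=I(f)$ and $I(g^j)=I(g)$, giving $I(f^i\circ g^j)\subset I(f^i)\cup I(g^j)=I(f)\cup I(g)$; alternatively it follows by composing (iv) with (ii).

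Finally, for (iv) the equal-exponent case is immediate: $f^m\circ g^m=(f\circ g)^m=h^m$, so $I(f^m\circ g^m)=I(h^m)=I(h)=I(f\circ g)$. For $i\neq j$, say $i<j$, the commutation identity rearranges the iterate as $(f^i\circ g^j)^n(z)=g^{n(j-i)}\big(h^{ni}(z)\big)$, which tends to $\infty$; the task is then to deduce $h^{ni}(z)\to\infty$, that is, $z\in I(h^i)=I(h)$. This is the same type of transfer obstacle as in (ii): escape of $g^{n(j-i)}\big(h^{ni}(z)\big)$ must be pushed back to escape of $h^{ni}(z)$, and I would handle it by the bounded-subsequence and partner-map mechanism of (i) rather than by applying a high iterate to an escaping point. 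I expect the crux of the entire theorem to be isolating and correctly executing this one transfer principle; parts (i), the reduction giving (iii), and the equal-exponent case of (iv) are then routine.
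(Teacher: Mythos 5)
Your part (i) is correct and is essentially the paper's own argument: both rest on the fact that a continuous map sends bounded sequences to bounded sequences, applied to a rewritten orbit (the paper shows $z\in I(f\circ g)$ if and only if $g(z)\in I(g\circ f)$; your ``partner map'' trick is the same computation in different clothing). Your reduction of (iii) to (ii) via the commuting pair $f^i, g^j$ together with $I(f^i)=I(f)$ and $I(g^j)=I(g)$ is also fine (the paper does a very similar reduction, writing $f^i\circ g^j=f^{i-j}\circ(f^j\circ g^j)$), and the equal-exponent case of (iv) is indeed immediate from $(f\circ g)^m=f^m\circ g^m$.

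However, for (ii), and for (iv) with $i\neq j$, your proposal contains no proof: you correctly isolate the transfer step as ``the main obstacle,'' but then only promise that it can be handled ``by the bounded-subsequence and partner-map mechanism of (i).'' That mechanism cannot do the job as stated. In (i) the contradiction comes from applying \emph{one fixed} entire map ($f$ or $g$) to a bounded subsequence. In (ii) you must pass from escape of $h^n(z)=f^n(g^n(z))$, together with boundedness of a subsequence $g^{n_k}(z)$, to escape of the orbit $f^m(z)$ of $z$ itself; there is no fixed continuous map relating these data --- what varies is the iterate $f^{n_k}$ --- so the bounded-to-bounded principle yields nothing. This is exactly where the paper's proof does its real work: arguing by contraposition ($z\notin I(f)$ and $z\notin I(g)$), it forms the infinite array of sequences $f^n(g^k(z))$, passes to limits row by row (so $f^n(g^k(z))\to g^{k-1}(l)$ by continuity of $g$), analyzes the limit set $X=\{l,g(l),g^2(l),\dots\}$ by cases (fixed point, preperiodic point, infinite set with finite or infinite accumulation point), and only then controls the diagonal $f^n(g^n(z))=(f\circ g)^n(z)$; part (iv) for $i\neq j$ is handled by a second array of the same kind, with rows converging to $l, f(l), f^2(l),\dots$. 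Whatever one thinks of the delicacy of that iterated-limit/diagonal step, your proposal has no counterpart to this compactness-type argument, so the core of (ii) --- and with it (iii), which you derive from (ii), and the $i\neq j$ case of (iv) --- remains unproven.
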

\begin{proof}
\begin{enumerate}
\item [(i)] 
We first show that $z\in I(f\circ g)$ if and only if $g(z)\in I(g\circ f).$ Let $z\in I(f\circ g).$ Then $(f\circ g)^n(z)\to\ity$ as $n\to\ity,$ i.e.,
$f((g\circ f)^{n-1}g(z))\to\ity$ as $n\to\ity.$ As $f$ is an entire function, this implies that $(g\circ f)^{n-1}g(z)\to\ity$ as $n\to\ity,$ i.e., $g(z)\in I(g\circ f).$ On the other hand, let $g(z)\in I(g\circ f).$ Then $(g\circ f)^{n}(g(z))\to\ity$ as $n\to\ity,$ i.e., $g((f\circ g)^n(z))\to\ity$ as $n\to\ity.$ Again, as $g$ is entire, this forces $(f\circ g)^n(z)\to\ity$ as $n\to\ity.$ We deduce that $z\in I(f\circ g)$ which proves the claim. As $f\circ g=g\circ f,$ we obtain $z\in I(f\circ g)$ if and only if $g(z)\in I(f\circ g)$ which implies $I(f\circ g)$ is completely invariant under $g,$ and by symmetry, under $f$ as well.
\item [(ii)] Suppose that \(z\not\in I(f)\) and \(z\not \in I(g)\). We shall prove that \(z\not \in I(f\circ g)\). \\Consider the following infinite sequences of rows where each row consists of orbit of $\{g^k(z): k\in\N\}$ under $f:$

\[\begin{array}{ll}
\vspace{3mm}
 f(g(z)) , f^2(g(z)) ,f^3(g(z)),\ldots\\
\vspace{3mm} 
f(g^2(z)) , f^2(g^2(z)) ,f^3(g^2(z)),\ldots\\
\vspace{3mm}
 f(g^3(z)) , f^2(g^3(z)) ,f^3(g^3(z)),\ldots\\
\vspace{3mm}
  \quad \vdots \quad\quad\quad \quad \quad \vdots\quad \quad \quad \quad  \quad \vdots\\
f(g^k(z)) ,f^2(g^k(z)) ,f^3(g^k(z)),\ldots\\
  \quad \vdots \quad\quad\quad \quad \quad \vdots\quad \quad \quad \quad  \quad \vdots
     \end{array}\]


 Since \(z\not\in I(f)\) and \(z\not \in I(g)\), therefore each of the above sequence is bounded and hence has a limit point. We can assume that each of the above sequence is actually convergent. Let \(l\) be the limit  of the sequence corresponding to first row, i.e., \(f^n(g(z))\to l\) as \(n\to \infty\) which gives \( f^n(g^2(z)))\to g(l)\) as \(n\to \infty\). Proceeding on similar lines, we have  \(f^n(g^k(z)))\to g^{k-1}(l)\) as \(n\to \infty.\) Let 
\[X=\{l,g(l),g^2(l)\ldots\}.\]
 We have the following cases to consider.\\ 
Case 1: \(l= g(l)\) i.e., \(l\) is a fixed point of \(g\), then \(g^k(l)=l , \forall\,  k \in\N.\)   In particular, we can extract a subsequence \(\{f^n(g^n(z)), n\in \N\}\) which also converges to \(l\),  i.e., \(\lim\limits_{n\to \infty}f^n(g^n(z))=l(\neq \infty)\) from which we see that \( z\not \in I(f\circ g)\).
\\Case 2: \(g^m(l)=g^{m+j}(l)\) for some  \(m\in \mathbf{N}\) and $j\geq 1$ i.e., \(g^m(l)\) is a fixed point of \(g\), which in turn implies
\begin{equation}\notag
\begin{split}
 \lim\limits_{n\to \infty}f^n(g^{m+j}(z))
&=g^{m+j}(l)\\
&=g^m(l), j\geq 1.
\end{split}
\end{equation}
This gives  \( \lim\limits_{n\to\infty}f^n(g^k(z))=g^m(l), \forall\, k\geq m\). Therefore, \(\lim\limits_{n\to \infty}f^n(g^n(z))=g^m(l)(\neq \infty)\) from which we deduce that  \(z\not \in I(f\circ g)\).\\ 
Case 3: When \(X\) is an infinite set then it  has a limit point \(\alpha\, \mathit{(say)}\)  
     i.e., \(g^{k-1}(l)\to \alpha\) as \(k\to \infty\). We consider the following two subcases:\\
     Subcase 1: \(\alpha\) is finite.\\
     As we have \(f^n(g(z))\to l\) as \(n\to \infty\), so by continuity of \(g\) we have
     \(\lim\limits_{n\to \infty}f^n(g^k(z))= g^{k-1}(l)\). This implies that  \( \lim\limits_{k\to \infty}\lim\limits_{n\to \infty}f^n(g^k(z))= \alpha\). Thus, every subsequence  will also converge to \(\alpha\), i.e., \(\lim\limits_{n\to \infty}f^n(g^n(z))=\alpha\) so that \( z\not\in I(f\circ g).\)\\
     Subcase 2: \(\alpha\) is infinity.\\
     By using  similar arguments, we have \(\lim\limits_{k\to\infty}\lim\limits_{n\to \infty}f^n(g^k(z))=\infty\). In particular, \(\lim\limits_{n\to\infty}f^n(g^k(z))=\infty\), i.e. \(g^k(z)\in I(f)\) which implies that \(z\in I(f) \), which is a contradiction to our supposition. 
This completes the proof of the result.

\item[(iii)] For $i, j\in\N,$ assume $i\geq j.$ Now using previous result we have
\begin{equation}\notag
\begin{split}
 I(f^i\circ g^j)
&=I(f^{i-j}\circ {f^j\circ g^j})\\
&\subset I(f^{i-j})\cup I(f^j\circ g^j)\\
&=I(f)\cup I(f\circ g)\\
&\subset I(f)\cup I(g).
\end{split}
\end{equation}
\item[(iv)] Suppose that  $i> j.$ Then $j+k=i$ for some $k\in\N$. Let $z\in I(f^i\circ g^j),$ i.e., $(f^i\circ g^j)^n(z)\to\infty$ as $n\to\infty.$
Suppose that $z\not\in I(f\circ g),$ i.e., $(f\circ g)^n(z)\to l$ (say), as $n\to \infty.$ Consider the following infinite sequences of rows where first row consists of orbit of $z$ under $f\circ g$ and the successive rows are the images of this orbit under $f^k, k\in\N$ respectively:\\
\[ \begin{array}{llllllllll}
	f(g(z)),f^2(g^2(z)),\ldots,f^j(g^j(z)),\ldots,f^i(g^i(z)),\ldots,f^{2j}(g^{2j}(z)),\ldots\\
\vspace{3mm}
	   f^2(g(z)),f^3(g^2(z)),\ldots,f^{j+1}(g^j(z)),\ldots,f^{i+1}(g^i(z)),\ldots,f^{2j+1}(g^{2j}(z)),\ldots\\
\vspace{3mm}
	  \quad \vdots\qquad\qquad\qquad \qquad\quad\quad \quad \quad \vdots\qquad\qquad\qquad\qquad \quad \quad \quad  \quad \vdots\\
\vspace{3mm}
	    f^{1+k}(g(z)),f^{2+k}(g^2(z)),\ldots,f^{j+k=i}(g^j(z)),\ldots,f^{i+k}(g^i(z)),\ldots,f^{2j+k}(g^{2j}(z)),\ldots\\
\vspace{3mm}
	   \quad \vdots \qquad\qquad\qquad \qquad \quad\quad\quad\quad \vdots\qquad \qquad \qquad \qquad  \quad\quad\quad\quad \vdots\\
\vspace{3mm}
	    f^{1+2k}(g(z)),f^{2+2k}(g^2(z)),\ldots,f^{i+k}(g^j(z)),\ldots,f^{i+2k}(g^i(z)),\ldots,f^{2j+2k=2i}(g^{2j}(z)),\ldots\\
 \quad \vdots \qquad\qquad\qquad \qquad \quad\quad\quad
\quad \vdots\qquad \qquad \qquad \qquad  \quad\quad\quad\quad \vdots
	   \end{array} \]

The first row converges to $l$. Using continuity of $f,$ second row will converge to $f(l)$ and so on. Let 
\[\{l,f(l),\ldots\}\]
 be the collection of  limit points corresponding to the rows. Hence has a limit point  $\alpha $ (say).\\
	   Case 1: \(\alpha(\neq\infty)\)\\
	   i.e. \(\lim\limits_{r\to\infty}f^{r-1}(l)=\alpha\).
	   As \(\lim\limits_{n\to \infty}f^n(g^n(z))=l\), so 
	   \( \lim\limits_{n\to\infty}f^{n+r-1}(g^n(z))= f^{r-1}(l)\). This gives
\begin{equation}\notag
\begin{split}
 \lim\limits_{r\to\infty} \lim\limits_{n\to\infty}f^{n+r-1}(g^n(z))
&= \lim\limits_{r\to\infty}f^{r-1}(l)\\
&=\alpha(\neq\infty).
\end{split}
\end{equation}
	   Therefore, every subsequence will also converge to \(\alpha\). In particular,\(\lim\limits_{n\to\infty}f^i(g^j(z))=\alpha(\neq \infty)\),which is a contradiction.\\
	   Case 2: \(\alpha\) is infinity.\\
	   i.e.,\(\lim\limits_{r\to\infty}f^{r-1}(l)=\infty\). We deduce that
\begin{equation}\notag
\begin{split}
 \lim\limits_{r\to\infty} \lim\limits_{n\to\infty}f^{n+r-1}(g^n(z))
&= \lim\limits_{r\to\infty}f^{r-1}(l)\\
&=\infty.
\end{split}
\end{equation}
	   In particular, \(\lim\limits_{n\to\infty}f^n(g^n(z))=\infty\), i.e., \(z\in I(f\circ g)\) which is a contradiction to our hypothesis.\qedhere

\end{enumerate}
\end{proof}

Recall that the postsingular set of an entire function $f$ is defined as 
\[\mathcal P(f)=\overline{\B(\bigcup_{n\geq 0}f^n(\text{Sing}(f^{-1}))\B)}.\]
Also  an entire function $f$ is called \emph{postsingularly bounded} if the postsingular set $\mathcal P(f)$ is bounded.
The following result  shows that postsingularly bounded entire functions are closed under composition:
\begin{theorem}\label{sec2,thm8}
Let $f$ and $g$ be permutable transcendental entire functions of bounded type. If $f$ and $g$ are postsingularly bounded, then so is $f\circ g.$
\end{theorem}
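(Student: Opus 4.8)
The plan is to reduce everything to the behaviour of the singular values of $f\circ g$ under iteration and to control this behaviour using the two bounded postsingular sets $\mathcal P(f)$ and $\mathcal P(g)$ separately. Since $f$ and $g$ are of bounded type, the composition $f\circ g$ is again of bounded type, and the starting point is the standard inclusion for singular values of a composition,
\[\text{Sing}((f\circ g)^{-1})\subset\text{Sing}(f^{-1})\cup f(\text{Sing}(g^{-1})),\]
which is exactly the inclusion underlying the closure result of \cite{berg2} quoted in Section~\ref{sec1}. Consequently $\mathcal P(f\circ g)$ is the closure of the union of the forward orbits, under $f\circ g$, of the two sets $\text{Sing}(f^{-1})$ and $f(\text{Sing}(g^{-1}))$, and it suffices to show that each of these orbits is bounded; in fact I aim for the clean inclusion $\mathcal P(f\circ g)\subset\mathcal P(f)\cup\mathcal P(g)$.

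Next I would exploit permutability to untangle the mixed iteration. An easy induction gives $(f\circ g)^n=f^n\circ g^n$, and $f$ (hence each $f^k$) commutes with $f\circ g$. Using these two facts, for $w\in\text{Sing}(f^{-1})$ the orbit becomes $(f\circ g)^n(w)=f^n(g^n(w))$, while for a point $w=f(s)$ with $s\in\text{Sing}(g^{-1})$ one computes $(f\circ g)^n(w)=f^n(g^n(f(s)))=f^{n+1}(g^n(s))$. Here $g^n(s)\in\mathcal P(g)$ by the very definition of the postsingular set, so the second family of orbits lies in $\bigcup_{m\ge 1}f^m(\mathcal P(g))$; symmetrically the first family is governed by the images $g^n(\text{Sing}(f^{-1}))$ pushed forward by $f^n$.

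The heart of the matter, and the step I expect to be the main obstacle, is therefore to show that each postsingular set is forward invariant under the other map, that is $g(\mathcal P(f))\subset\mathcal P(f)$ and $f(\mathcal P(g))\subset\mathcal P(g)$. Granting these, the tower $f^m(\mathcal P(g))$ collapses into $\mathcal P(g)$, and likewise $f^n(g^n(w))\in\mathcal P(f)$ for $w\in\text{Sing}(f^{-1})$, so both orbit families remain in the bounded set $\mathcal P(f)\cup\mathcal P(g)$, yielding the desired inclusion. Since $\mathcal P(f)$ is closed and forward invariant under $f$ and $g\circ f^n=f^n\circ g$, the invariance $g(\mathcal P(f))\subset\mathcal P(f)$ reduces to the single inclusion $g(\text{Sing}(f^{-1}))\subset\mathcal P(f)$, which I would attack by treating critical and asymptotic values separately: for a critical value $v=f(c)$ one differentiates $f\circ g=g\circ f$ to obtain $f'(g(c))\,g'(c)=0$, so that either $g(c)$ is a critical point of $f$, making $g(v)=f(g(c))$ a critical value of $f$, or $c$ is also a critical point of $g$; and for an asymptotic value one transports the defining curve by $g$. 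The delicate part is precisely the degenerate branches, when $c$ is simultaneously critical for $g$, or when $g$ fails to tend to infinity along the asymptotic curve, where the naive argument only places $g(v)$ in $f(\text{Sing}(g^{-1}))$ rather than visibly in $\mathcal P(f)$; resolving these cases is the crux, and I would attack it by feeding the relation $f\circ g=g\circ f$ back into itself to relocate such exceptional values into the common postsingular structure of the two maps. Once the invariance is secured, the inclusion $\mathcal P(f\circ g)\subset\mathcal P(f)\cup\mathcal P(g)$, and hence its boundedness, follows at once.
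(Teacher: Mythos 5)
Your plan is genuinely different from the paper's, but it contains a gap that you yourself flag and never close, and it sits exactly at the load-bearing point. The preparatory steps are fine: the inclusion $\text{Sing}((f\circ g)^{-1})\subset \text{Sing}(f^{-1})\cup f(\text{Sing}(g^{-1}))$, the identity $(f\circ g)^n=f^n\circ g^n$, the computation $(f\circ g)^n(f(s))=f^{n+1}(g^n(s))$ with $g^n(s)\in\mathcal P(g)$, and the reduction of $g(\mathcal P(f))\subset\mathcal P(f)$ to the single inclusion $g(\text{Sing}(f^{-1}))\subset\mathcal P(f)$ are all correct. But that last inclusion is never proved. For a critical value $v=f(c)$, differentiating $f\circ g=g\circ f$ gives $f'(g(c))\,g'(c)=0$, and in the branch $g'(c)=0$ (which you cannot exclude) you only obtain $g(v)=f(g(c))\in f(CV(g))$, which has no visible relation to $\mathcal P(f)$. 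For an asymptotic value $v$ of $f$ along a curve $\Gamma$, you need $g$ to tend to infinity along $\Gamma$ in order to conclude $g(v)\in AV(f)$, and an entire function can perfectly well stay bounded on a curve tending to infinity. ``Feeding the relation back into itself'' is not an argument, and it is not even clear that the invariance $g(\mathcal P(f))\subset\mathcal P(f)$, or your target inclusion $\mathcal P(f\circ g)\subset\mathcal P(f)\cup\mathcal P(g)$, holds at this level of generality. A telltale sign that something is missing: your argument never uses the hypothesis that $f$ and $g$ are of bounded type.

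That hypothesis is precisely what the paper's proof exploits in order to avoid your crux. The paper argues by contradiction: if some singular value $\al$ of $f\circ g$ escapes, then $\al\in I(f)\cup I(g)$ by Theorem \ref{sec2,thm4}(ii), while $\al\in \text{Sing}(f^{-1})\cup f(\text{Sing}(g^{-1}))$; the four resulting cases are eliminated using facts available for permutable bounded-type functions, namely $I(f)\subset J(f)$ and $I(g)\subset J(g)$, the equality $J(f)=J(g)$, the invariance $f(F(g))\subset F(g)$, and, in the case $\al=f(\be)$ with $\be\in\text{Sing}(g^{-1})$ and $\al\in I(g)$, the relation $g^n(f(\be))=f(g^n(\be))$ together with the fact that an entire function maps bounded sets to bounded sets, so that $f(g^n(\be))\to\ity$ forces $\be\in I(g)$. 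Thus the paper needs only that singular orbits of $f\circ g$ do not escape --- nothing as strong as forward invariance of the postsingular sets under the other map. If you want to keep your framework, the realistic repair is to replace the unproven invariance step by this kind of escaping-set contradiction, which amounts to reconstructing the paper's proof rather than completing yours.
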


\begin{proof}
Denote  $A=\text{Sing}(f^{-1}), B=\text{Sing}(g^{-1})$ and $C=\text{Sing}(f\circ g)^{-1}.$ It suffices to show that for all $w\in C, (f\circ g)^{n}(w)$ is bounded as $n\to\ity.$ Suppose that  for some $\al\in C, (f\circ g)^{n}(\al)\to\ity$ as $n\to\ity,$ i.e., $\al\in I(f\circ g).$ As $C\subset A\cup f(B)$ \cite{berg2}, therefore, either $\al\in A$ or $\al\in f(B).$ Also as $I(f\circ g)\subset I(f)\cup I(g),$ 
 accordingly we have the following four cases:
\begin{enumerate}
\item[(i)]   $\al\in A$ and $\al\in I(f).$ In this case, we arrive at   a contradiction to postsingular set of $f$  being bounded.\\
\item[(ii)] $\al\in A$ and $\al\in I(g).$ As $g$ is of bounded type, so $I(g)\subset J(g),$ and as $f\circ g=g\circ f,$ therefore $J(f)=J(g)$ \cite{dinesh1}. We obtain $\al\subset A\cap J(f).$ Since $\al\in\mathcal P(f)$ and $f$ being postsingularly bounded implies $\mathcal P(f)\subset F(f),$ i.e., $\al\in F(f)$ a contradiction.\\
\item[(iii)] $\al\in f(B)$ and $\al\in I(f).$ As $f$ is of bounded type, $I(f)\subset J(f)$ and as argued above $J(f)=J(g).$ Also as $B\subset\mathcal P(g)\subset F(g),$ implies $f(B)\subset f(F(g)).$ This further implies 
\begin{equation}\notag
\begin{split}
f(B)\cap J(g)
& \subset f(F(g))\cap J(g)\\
& \subset F(g)\cap J(g)\\
& =\emptyset,
\end{split}
\end{equation} a contradiction.\\
\item[(iv)] $\al\in f(B)$ and $\al\in I(g).$ As $\al\in f(B),$ so $\al=f(\be)$ for some $\be\in B.$ As $f(\be)\in I(g),$  so $g^n(f(\be))\to\ity$ as $n\to\ity.$ Since $f$ and $g$ are permutable,  $f(g^n(\be))\to\ity$ as $n\to\ity.$ As $f$ is an entire function, $g^n(\be)$ must tend to $\ity$ as $n\to\ity,$ i.e, $\be\in I(g),$ a contradiction to postsingular set of $g$ being bounded. 
\end{enumerate}
Thus in all the four cases we arrive at a contradiction. Hence $f\circ g$ is postsingularly bounded.
\end{proof}
\begin{remark}\label{sec2,rem5}
It can be seen by  induction  that if $g_1,\ldots, g_n$ are permutable entire functions of bounded type which are postsingularly bounded, then so is $g_1\circ\cdots\circ g_n.$
\end{remark}

In \cite{SZ} it was shown that if $f$ is an exponential map, that is, $f= e^{\la z},\,\la\in\C\setminus\{0\},$ then all components of $I(f)$ are unbounded, i.e., Eremenko's conjecture \cite{e1} holds for exponential maps.
Furthermore, in \cite{R1} it was shown that if $f$ is an entire function of bounded type for which all singular orbits are bounded (i.e., $f$ is postsingularly bounded), then each connected component of $I(f)$ is unbounded, providing a partial answer to a conjecture of Eremenko \cite{e1}.
Theorem \ref{sec2,thm8}, in particular, establishes that for a family $\mathcal F$ of permutable entire functions (i.e., any two members in the family are commuting) of bounded type which are postsingularly bounded, each connected component of $I(f)$, $f\in\mathcal F$ is unbounded, providing a partial answer to a conjecture of Eremenko \cite{e1}.

Recall that an entire function $f$ is called \emph{hyperbolic} if the postsingular set $\mathcal P(f)$ is a compact subset of $F(f).$
For instance, $e^{\la z},\,0<\la<\frac{1}{e}$ are examples of hyperbolic entire functions. 
It follows from  result \cite[Theorem 1.1]{R1} that if $f\in\mathcal B$ is hyperbolic, then all components of $I(f)$ are unbounded.\\
The following result  shows that hyperbolic entire functions are closed under composition:
\begin{theorem}\label{thm5'}
Let $f$ and $g$ be hyperbolic entire functions of bounded type satisfying $f\circ g=g\circ f.$ Then $f\circ g$ is hyperbolic.
\end{theorem}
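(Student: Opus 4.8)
The plan is to deduce hyperbolicity of $f\circ g$ from its two defining requirements separately: that $\mathcal P(f\circ g)$ is compact, and that it is contained in $F(f\circ g)$. Compactness is immediate: since $f$ and $g$ are hyperbolic, $\mathcal P(f)$ and $\mathcal P(g)$ are compact, so in particular $f$ and $g$ are postsingularly bounded, and Theorem \ref{sec2,thm8} then gives that $f\circ g$ is postsingularly bounded; as $\mathcal P(f\circ g)$ is closed by definition and now bounded, it is compact. The whole difficulty is therefore concentrated in showing $\mathcal P(f\circ g)\subseteq F(f\circ g)$.

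First I would compare the Fatou sets. Since $f$ and $g$ are permutable functions of bounded type, $J(f)=J(g)$, hence $F(f)=F(g)$. Moreover $J(f)$ is a closed infinite set that is completely invariant under both $f$ and $g$, and therefore completely invariant under $h:=f\circ g$, because $h(J(f))=f(g(J(f)))\subseteq J(f)$ and $h^{-1}(J(f))=g^{-1}(f^{-1}(J(f)))=g^{-1}(J(f))=J(f)$. By the minimality of the Julia set (it is contained in every closed completely invariant set with more than two points), $J(h)\subseteq J(f)$, that is $F(f)\subseteq F(h)$. Thus it suffices to prove $\mathcal P(h)\subseteq F(f)$.

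Next I would locate the singular values and trace their orbits using commutativity. From $\text{Sing}(h^{-1})\subseteq\text{Sing}(f^{-1})\cup f(\text{Sing}(g^{-1}))$, hyperbolicity gives $\text{Sing}(f^{-1})\subseteq\mathcal P(f)\subseteq F(f)$ and $\text{Sing}(g^{-1})\subseteq\mathcal P(g)\subseteq F(g)=F(f)$, whence $f(\text{Sing}(g^{-1}))\subseteq f(F(f))\subseteq F(f)$, so every singular value of $h$ lies in $F(f)$. For the orbits I would use $h^n=f^n\circ g^n$, valid since $f$ and $g$ commute. If $w\in\text{Sing}(f^{-1})$ then $h^n(w)=g^n(f^n(w))$ with $f^n(w)\in\mathcal P(f)$, so $h^n(w)\in\bigcup_{n\geq0}g^n(\mathcal P(f))$; if $w=f(\beta)$ with $\beta\in\text{Sing}(g^{-1})$ then $h^n(w)=f^{n+1}(g^n(\beta))$ with $g^n(\beta)\in\mathcal P(g)$, so $h^n(w)\in\bigcup_{n\geq1}f^n(\mathcal P(g))$. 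Consequently the full postsingular orbit of $h$ is contained in $\overline{\bigcup_{n\geq0}g^n(\mathcal P(f))}\cup\overline{\bigcup_{n\geq1}f^n(\mathcal P(g))}$.

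Finally I would invoke the structure of hyperbolic bounded-type maps to close the argument. The key input, and the step I expect to be the main obstacle, is the claim that for a hyperbolic $H\in\mathcal B$ and a compact set $K\subseteq F(H)$ the forward orbit $\overline{\bigcup_n H^n(K)}$ is again a compact subset of $F(H)$; this is exactly where hyperbolicity, rather than mere postsingular boundedness, is essential, since it rules out accumulation of the orbit on the Julia set (the closure could otherwise meet $J(H)$). It follows from the fact that a hyperbolic map of bounded type has no wandering domains, no Baker domains and no rotation domains, so $F(H)$ is a union of attracting basins and forward orbits of compacta converge locally uniformly to the attracting cycles while staying at positive distance from $J(H)$. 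Applying this to $g$ with $K=\mathcal P(f)$ and to $f$ with $K=\mathcal P(g)$, both compact subsets of $F(f)=F(g)$, shows that the two closures above are compact subsets of $F(f)$. Hence $\mathcal P(h)$, being contained in this compact subset of $F(f)$, satisfies $\mathcal P(h)\subseteq F(f)\subseteq F(h)$, and together with compactness this proves that $f\circ g$ is hyperbolic.
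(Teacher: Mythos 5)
Your proof is correct, and while it follows the same skeleton as the paper's --- bounding $\mathcal P(f\circ g)$ by $\overline{\bigcup_{k\geq 0}f^k(\mathcal P(g))\cup\bigcup_{m\geq 0}g^m(\mathcal P(f))}$ and using $F(f)=F(g)$ for permutable bounded-type maps --- it diverges precisely at the two places where the paper leans on a citation or on ``it can be easily seen'', and in both places your version is the more careful one. You actually prove the orbit containment via $h^n=f^n\circ g^n$ and $\mathrm{Sing}((f\circ g)^{-1})\subseteq\mathrm{Sing}(f^{-1})\cup f(\mathrm{Sing}(g^{-1}))$, and you derive $F(f)\subseteq F(f\circ g)$ by the Montel/minimality argument instead of citing \cite{dinesh1}. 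More importantly, the closing step is genuinely different. The paper argues: since $f$ is of bounded type, orbits of points of $F(f)$ stay bounded \cite{EL}, hence $\bigcup_{k}f^k(\mathcal P(g))$ is a bounded subset of $F(f)$, and ``therefore'' its closure is a compact subset of $F(f)=F(f\circ g)$. That last inference has a gap: the closure of a bounded subset of $F(f)$ can perfectly well meet $J(f)$, so boundedness alone does not yield $\mathcal P(f\circ g)\subseteq F(f\circ g)$. Your proof names exactly this obstacle and closes it by invoking the structure of hyperbolic maps in $\mathcal B$: no Baker domains (since $I\subseteq J$ in class $\mathcal B$), no parabolic or rotation domains, and no wandering domains (finite limit functions on a wandering domain lie in the derived set of $\mathcal P$ and in $J$, impossible when $\mathcal P$ is a compact subset of $F$), so $F$ is a union of attracting basins; a compact $K\subseteq F$ meets only finitely many components, whence $\bigcup_n H^n(K)$ accumulates only on the attracting cycles and its closure is a compact subset of $F(H)$. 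That is the step where hyperbolicity, rather than mere postsingular boundedness, is genuinely used, and your argument makes this visible; the price is that you need the (standard, see \cite{berg7}) classification of Fatou components of hyperbolic bounded-type functions, whereas the paper's shorter route needs only \cite{EL} and \cite{dinesh1} but leaves the closure step unjustified.
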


\begin{proof}
It can be easily seen that 
\[\mathcal P(f\circ g)\subset\overline{\B(\bigcup_{k\geq 0}f^k(\mathcal P(g))\bigcup(\bigcup_{m\geq 0}g^m(\mathcal P(f)))\B)}.\]
As $f$ and $g$ are hyperbolic, $\mathcal P(f)$ and $\mathcal P(g)$ are compact subsets of $F(f)$ and $F(g)$ respectively. Also as $f$ and $g$ are permutable entire functions of bounded type we have $F(f)=F(g)$ \cite{dinesh1}. As $f$ is of bounded type, for $z\in F(f)$, $f^n(z)$ stays bounded as $n\to\ity$ \cite{EL}. It follows that $\bigcup_{k\geq 0}f^k(\mathcal P(g))$ is bounded subset of $F(f).$ On similar lines, $\bigcup_{m\geq 0}g^m(\mathcal P(f))$ is bounded subset of $F(f).$ Therefore, $\overline{\B(\bigcup_{k\geq 0}f^k(\mathcal P(g))\bigcup(\bigcup_{m\geq 0}g^m(\mathcal P(f)))\B)}$ is a compact subset of $F(f)=F(f\circ g)$ \cite{dinesh1}. Hence, $\mathcal P(f\circ g)$ is a compact subset of $F(f\circ g)$ and the result gets established.
\end{proof}

\begin{remark}\label{sec2,rem6}
It can be seen by  induction  that if $g_1,\ldots, g_n$ are permutable entire functions of bounded type which are hyperbolic, then so is $g_1\circ\cdots\circ g_n.$
\end{remark}

Theorem \ref{thm5'}, in particular, establishes that for a family $\mathcal G$ of permutable entire functions of bounded type which are hyperbolic, each connected component of $I(g)$, $g\in\mathcal G$ is unbounded, providing a partial answer to a conjecture of Eremenko \cite{e1}.

The following result considers backward invariance of the escaping set under the composite entire function:
\begin{theorem}\label{sec2,thm5}
Let $f$ and $g$ be   transcendental  entire functions 
 satisfying $f\circ g=g\circ f.$ Then
\begin{enumerate}
\item[(1)] $I(f)$ and $I(g)$ are backward invariant under $f\circ g;$
\item[(2)] If $w\notin I(f)$ then $g(w)\notin I(f);$
\end{enumerate}
\end{theorem}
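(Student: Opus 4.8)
The plan is to establish part (2) first, since part (1) will then follow by peeling off a single iterate. The engine behind both parts is a single elementary observation about entire maps: if $h$ is entire and $h(a_n)\to\ity$ for a sequence $(a_n),$ then necessarily $a_n\to\ity.$ Indeed, were $(a_n)$ to have a bounded subsequence, that subsequence would in turn have a finite accumulation point $a,$ and continuity of $h$ would force the images along it to accumulate at the finite value $h(a),$ contradicting $h(a_n)\to\ity.$ I will use this with $h=g$ for the $I(f)$-statements and with $h=f$ for the $I(g)$-statements.

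For part (2) I would argue by contraposition: it suffices to show that $g(w)\in I(f)$ implies $w\in I(f).$ Suppose $g(w)\in I(f),$ so that $f^n(g(w))\to\ity$ as $n\to\ity.$ Since $f$ and $g$ commute, an immediate induction gives $f^n\circ g=g\circ f^n$ for every $n,$ whence $g(f^n(w))=f^n(g(w))\to\ity.$ Applying the observation above to the entire function $g$ and the sequence $a_n=f^n(w)$ yields $f^n(w)\to\ity,$ i.e.\ $w\in I(f).$ This is exactly the contrapositive of the desired implication.

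For part (1), I would handle $I(f)$ and $I(g)$ separately but by the same mechanism. To see that $I(f)$ is backward invariant under $f\circ g,$ take $w$ with $(f\circ g)(w)\in I(f)$; then $f^{n+1}(g(w))=f^n\bigl((f\circ g)(w)\bigr)\to\ity,$ so $g(w)\in I(f),$ and part (2) in its contrapositive form just established forces $w\in I(f).$ For $I(g),$ take $w$ with $(f\circ g)(w)\in I(g),$ so $g^n(f(g(w)))\to\ity$; using $g^n\circ f=f\circ g^n$ this equals $f(g^{n+1}(w))\to\ity,$ and applying the observation to the entire function $f$ gives $g^{n+1}(w)\to\ity,$ i.e.\ $w\in I(g).$ Both inclusions $(f\circ g)^{-1}(I(f))\subset I(f)$ and $(f\circ g)^{-1}(I(g))\subset I(g)$ are thus verified.

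The only delicate point is the opening observation, and more precisely the need to pass to a subsequence and invoke continuity and compactness rather than inverting $g$ directly: a transcendental entire $g$ is far from injective, so one genuinely cannot ``solve'' $g(a_n)\to\ity$ for $a_n$ pointwise. The remaining work is purely formal, namely correctly commuting the iterates via $f^n\circ g=g\circ f^n$ and $g^n\circ f=f\circ g^n,$ and peeling off one iterate to reduce the backward-invariance statements to part (2). No boundedness-of-singular-set or normality hypotheses are needed here; only the entirety, hence continuity, of $f$ and $g$ together with commutativity.
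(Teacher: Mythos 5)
Your proposal is correct and runs on essentially the same engine as the paper's proof: both arguments rewrite $f^n\circ g=g\circ f^n$ via commutativity and then use the fact that for an entire (hence continuous) map $h$, $h(a_n)\to\infty$ forces $a_n\to\infty$. The only differences are organizational --- you prove part (2) first and deduce the $I(f)$ half of part (1) from it by peeling off one iterate, and you spell out the subsequence/compactness justification that the paper leaves implicit in the phrase ``as $g$ is an entire function, this implies\dots'' --- but the mechanism is identical.
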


\begin{proof}
\begin{enumerate}
\item [(1)] 
Suppose $w\in\C$ such that $f\circ g(w)\in I(f).$ Then $f^n(f\circ g(w))\to\ity$ as $n\to\ity,$ i.e., $g(f^{n+1}(w))\to\ity$ as $n\to\ity.$ As $g$ is an entire function, this implies that $f^{n+1}(w)$ must tend to $\ity$ as $n\to\ity$ and hence $w\in I(f).$ On similar lines, one can show that $I(g)$ is backward invariant under $f\circ g.$ This completes the proof of the result.
\item [(2)] If $g(w)\in I(f),$ then $f^n(g(w))\to\ity$ as $n\to\ity$, i.e., $g(f^n(w))\to\ity$ as $n\to\ity.$ As $g$ is an entire function, this implies $f^n(w)$ must tend to $\ity$ as $n\to\ity$ contrary to our assumption and hence proves the result.\qedhere
\end{enumerate}
\end{proof}

Recall that if $g$ and $h$ are  entire functions and  $f:\C\to\C$ is a non-constant continuous function such that $\displaystyle{f\circ g=h\circ f,}$
then we say that $g$ and $h$ are semiconjugated (by $f$) and $f$ is called a semiconjugacy. 
This definition first appeared in \cite{berg5} where it was shown that for given entire functions $g$ and $h$, there are at most countable number of entire functions $f$ such that $f\circ g=h\circ f.$
\begin{theorem}\label{sec2,thm1}
If $g$ and $h$ are semiconjugated by $f$ then $f^{-1}(I(h))\subset I(g).$
\end{theorem}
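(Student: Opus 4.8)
The plan is to exploit the semiconjugacy relation $f\circ g=h\circ f$ to intertwine the iterates of $g$ and $h$ through $f$, and then use nothing more than the continuity of $f$ to transfer escaping behaviour from the $h$-side to the $g$-side. The whole argument reduces to one algebraic identity followed by a short compactness argument.

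First I would establish by induction on $n$ the intertwining identity $f\circ g^n=h^n\circ f$ for all $n\in\N$. The base case $n=1$ is exactly the hypothesis. For the inductive step, assuming $f\circ g^n=h^n\circ f$, one computes
\[
f\circ g^{n+1}=(f\circ g)\circ g^n=(h\circ f)\circ g^n=h\circ(f\circ g^n)=h\circ(h^n\circ f)=h^{n+1}\circ f.
\]
Read pointwise, this says $f(g^n(z))=h^n(f(z))$ for every $z\in\C$ and every $n\in\N$. Now take $z\in f^{-1}(I(h))$, so that $f(z)\in I(h)$, i.e. $h^n(f(z))\to\ity$ as $n\to\ity$. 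The identity then gives $f(g^n(z))=h^n(f(z))\to\ity$ as $n\to\ity$, so the $f$-images of the $g$-orbit of $z$ escape.

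It then remains to deduce $g^n(z)\to\ity$, i.e. $z\in I(g)$. Here I would argue by contradiction: suppose $g^n(z)\not\to\ity$. Then $\liminf_{n}|g^n(z)|<\ity$, so the sequence $\{g^n(z)\}$ admits a bounded subsequence, and by Bolzano--Weierstrass a further subsequence $g^{n_k}(z)\to w$ for some finite $w\in\C$. Since $f$ is continuous, $f(g^{n_k}(z))\to f(w)$, a finite value, which contradicts $f(g^n(z))\to\ity$. Hence $g^n(z)\to\ity$, that is $z\in I(g)$, and since $z\in f^{-1}(I(h))$ was arbitrary we conclude $f^{-1}(I(h))\subset I(g)$.

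The main (and essentially only) obstacle is this final transfer step, namely passing from $f(g^n(z))\to\ity$ back to $g^n(z)\to\ity$. Naively one would want $f$ to be proper (preimages of bounded sets bounded), which is not part of the hypotheses; but $f$ is merely assumed continuous and non-constant. The subsequence argument above sidesteps the need for properness entirely, using only continuity: a non-escaping $g$-orbit would have a finite accumulation point whose $f$-image would have to be finite, contradicting escape on the $h$-side. Everything preceding this step is a routine induction and substitution.
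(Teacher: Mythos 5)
Your proof is correct and follows essentially the same route as the paper: iterate the semiconjugacy to get $f\circ g^n=h^n\circ f$, push the escape of $h^n(f(z))$ through this identity, and then pull escape back from $f(g^n(z))$ to $g^n(z)$. The only difference is that you spell out this last transfer step via a Bolzano--Weierstrass subsequence argument, whereas the paper simply asserts it using the continuity of the entire function $f$; your explicit version is a welcome filling-in of that implicit detail, not a different method.
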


\begin{proof}
Let $w\in f^{-1}(I(h)).$ Then $h^n(f(w))\to\ity$ as $n\to\ity,$ which in turn implies $f(g^n(w))\to\ity$ as $n\to\ity.$ But this gives $g^n(w)$ must tend to $\ity,$ i.e., $w\in I(g)$ and hence the result.
\end{proof}

The particular case when $f$ is entire and $h=g$ gives
\begin{corollary}\label{sec2,cor1}
Let $f$ and $g$ be   transcendental  entire functions satisfying $f\circ g=g\circ f.$ Then $g(I(f))\supset I(f).$
\end{corollary}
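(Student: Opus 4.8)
The goal is the inclusion $I(f)\subset g(I(f))$, since $g(I(f))\supset I(f)$ is exactly this. The plan is to start from an arbitrary point $w\in I(f)$ and exhibit a point $z\in I(f)$ with $g(z)=w$. The first step is to secure such a preimage: as $g$ is transcendental entire, Picard's theorem guarantees that $g$ omits at most one value, so $g$ is surjective onto $\C$ apart from at most one point, and hence (all but at most one) $w\in I(f)$ does admit a $g$-preimage $z$. The remaining task is to check that any such $z$ is itself escaping for $f$.

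For this I would exploit the commutativity $f\circ g=g\circ f$, which upgrades by an immediate induction to $f^n\circ g=g\circ f^n$ for every $n$. Consequently $g(f^n(z))=f^n(g(z))=f^n(w)$, and since $w\in I(f)$ the right-hand side tends to $\ity$. Thus $g(f^n(z))\to\ity$ as $n\to\ity$. Because $g$ is entire it is continuous and therefore carries bounded sets to bounded sets; were $\{f^n(z)\}$ to possess a bounded subsequence, then $\{g(f^n(z))\}$ would be bounded along that subsequence, contradicting $g(f^n(z))\to\ity$. Hence $f^n(z)\to\ity$, that is $z\in I(f)$, and then $w=g(z)\in g(I(f))$, as required.

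The same conclusion can be read off directly from Theorem \ref{sec2,thm1}, which is the route the ``particular case'' remark suggests: taking the semiconjugacy there to be $g$ and both semiconjugated functions to be $f$, so that the semiconjugacy relation reduces to the present commutativity $g\circ f=f\circ g$, the theorem yields $g^{-1}(I(f))\subset I(f)$. Applying $g$ to both sides and using the set identity $g(g^{-1}(A))=A\cap g(\C)$ then gives $I(f)\cap g(\C)\subset g(I(f))$. Either way, the only genuine obstacle I foresee is the surjectivity of $g$, that is, the existence of the preimage $z$; I expect to dispatch this with Picard's theorem. The one delicate point is the value $g$ may omit: this single point, where $g(\C)$ can fail to fill all of $\C$, is the lone place where the inclusion must be argued with care, and it is the crux of a fully rigorous treatment.
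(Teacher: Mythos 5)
Your route is in substance the paper's own: the paper gives no separate argument for this corollary, noting only that it is the ``particular case'' of Theorem \ref{sec2,thm1} with the semiconjugacy taken to be $g$ and both semiconjugated maps equal to $f$ --- exactly your second paragraph --- while your direct argument (pull $w$ back by $g$, push the commutativity through $f^n$, and use that an entire $g$ carries bounded subsequences to bounded sequences) is simply the proof of Theorem \ref{sec2,thm1} re-run in this special case. So you have reproduced the paper's derivation.

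The ``delicate point'' you flag, however, is not a technicality you failed to dispatch; it is a genuine gap, and it sits in the paper's proof rather than in yours. What the argument actually yields is $I(f)\cap g(\C)\subset g(I(f))$; upgrading this to $I(f)\subset g(I(f))$ requires the Picard exceptional value of $g$ (if any) not to lie in $I(f)$, and nothing in the hypotheses guarantees that. Indeed the corollary as stated is false: take $f=g=e^z$, which trivially commute. Then $0\in I(f)$, since $0\mapsto 1\mapsto e\mapsto e^{e}\mapsto\cdots\to\ity$ along an increasing real sequence, yet $0\notin g(\C)=\C\setminus\{0\}$, so $0\notin g(I(f))$ and $g(I(f))\not\supset I(f)$. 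Thus your proof establishes everything that can be established (all of $I(f)$ with at most one point deleted lies in $g(I(f))$), and the obstruction you isolated as ``the crux of a fully rigorous treatment'' is precisely where the paper's statement breaks down: its derivation silently assumes $g$ is surjective, and that assumption can fail in exactly the way you predicted.
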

The result can be extended up to closure of the escaping set.
\begin{theorem}
Let $f$ and $g$ be entire functions and $f\circ g=g\circ f.$  Then $\overline{I(f)}\subset g(\overline{I(f)}).$
\end{theorem}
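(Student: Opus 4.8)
The plan is to promote the inclusion $I(f)\subset g(I(f))$ of Corollary~\ref{sec2,cor1} to the closures by a sequential compactness argument. Fix $w\in\overline{I(f)}$ and pick $w_k\in I(f)$ with $w_k\to w$; by Corollary~\ref{sec2,cor1} choose $z_k\in I(f)$ with $g(z_k)=w_k$. The goal is to produce a single finite point $z\in\overline{I(f)}$ with $g(z)=w$, which exhibits $w\in g(\overline{I(f)})$. If some subsequence of $\{z_k\}$ is bounded, extract a convergent one $z_{k_j}\to z$; then $z\in\overline{I(f)}$ since each $z_{k_j}\in I(f)$, and continuity of $g$ yields $g(z)=\lim_j g(z_{k_j})=\lim_j w_{k_j}=w$, so $w=g(z)\in g(\overline{I(f)})$. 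This settles the statement whenever the preimages can be kept bounded.

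It is worth stressing that one cannot replace this argument by simply taking closures in Corollary~\ref{sec2,cor1}. Doing so gives only $\overline{I(f)}\subset\overline{g(I(f))}$, whereas continuity of $g$ supplies the inclusion $g(\overline{I(f)})\subset\overline{g(I(f))}$ in the opposite direction. Hence $\overline{g(I(f))}$ is in general strictly larger than the target set $g(\overline{I(f)})$, and an argument that manufactures an honest finite preimage of $w$ is indispensable.

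The main obstacle is the complementary case $z_k\to\infty$. Since a transcendental entire $g$ is not a proper map, the images $g(z_k)$ can converge to a finite limit $w$ while the preimages escape, which is precisely the behaviour near an asymptotic value, or the Picard omitted value, of $g$. To handle this I would invoke the backward invariance $g^{-1}(I(f))\subset I(f)$ recorded in Theorem~\ref{sec2,thm5}(2): every finite $g$-preimage of a point of $I(f)$ already lies in $I(f)$, and since $g$ omits at most one value, for $w$ not equal to the exceptional value of $g$ one can select finite preimages of the $w_k$ and argue that they remain bounded. The genuine crux is the exceptional value itself: if $w$ is the Picard omitted value of $g$, then $w\notin g(\C)$ and no preimage exists at all, so this single point is where the real difficulty concentrates and is the step I would examine most carefully before asserting the inclusion in full generality.
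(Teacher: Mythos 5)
Your bounded-subsequence case is correct, and your instinct about where the real difficulty sits is exactly right --- but the conclusion to draw is stronger than ``examine carefully before asserting'': the gap cannot be closed, because the statement is false as written. Take $f=g=e^z$ (which certainly commute). The orbit $0\mapsto 1\mapsto e\mapsto e^e\mapsto\cdots$ tends to infinity, so $0\in I(f)\subset\overline{I(f)}$; but $0$ is omitted by $g=e^z$, so $0\notin g(\C)\supset g(\overline{I(f)})$. The same example already refutes Corollary~\ref{sec2,cor1}, on which both you and the paper rely: its derivation from Theorem~\ref{sec2,thm1} gives $g^{-1}(I(f))\subset I(f)$, and applying $g$ to this yields only $I(f)\cap g(\C)\subset g(I(f))$, not $I(f)\subset g(I(f))$. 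So your refusal to assert the inclusion at the exceptional value is the correct mathematical judgement, not a defect of your argument. For comparison, the paper's own proof takes $z_n\in I(f)$ with $z_n\to w$ and applies ``a continuous branch of $g^{-1}$'' to the whole sequence; that is precisely the unjustified step you isolated, since no such branch exists in any neighbourhood of $w$ when $w$ is an omitted (or more generally asymptotic or critical) value of $g$. The paper silently assumes the exceptional case away rather than addressing it.

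One improvement to your own argument: in the non-exceptional case $w\in g(\C)$ you do not need to worry about whether the preimages handed to you by Corollary~\ref{sec2,cor1} stay bounded, and you should not use those preimages at all. Instead fix any $z$ with $g(z)=w$. Since $g$ is an open map, $g(B(z,\epsilon))$ contains a neighbourhood of $w$ for every $\epsilon>0$, hence contains $w_k$ for all large $k$; so there are preimages $z_k'\in B(z,\epsilon)$ of $w_k$, and each $z_k'\in I(f)$ by Theorem~\ref{sec2,thm5}(2). Letting $\epsilon\to 0$ gives $z\in\overline{I(f)}$ and $w=g(z)\in g(\overline{I(f)})$. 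This proves the corrected statement $\overline{I(f)}\cap g(\C)\subset g(\overline{I(f)})$: the inclusion claimed by the theorem can fail only at the single Picard exceptional value of $g$, which is exactly the point you flagged.
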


\begin{proof}
Suppose $w\in\overline{I(f)}.$ Then there exist a sequence $\{z_n\}\subset I(f)$ such that $z_n\to w$ as $n\to\ity.$ Denote a continuous branch of $g^{-1}$ by $g^{-1}$ itself, we get that $g^{-1}(z_n)\to g^{-1}(w)$ as $n\to\ity.$ Using Corollary  \ref{sec2,cor1} we get $g^{-1}(z_n)\subset I(f)$ and so $g^{-1}(w)\in\overline{I(f)}$ and hence the result.
\end{proof}


\section{some properties of the map $f(z)=z+1+e^{-z}$}\label{sec3}

In this section we explore some dynamical properties of the map $f(z)=z+1+e^{-z}$. Throughout this section, by $f$ and $g$ we will denote the map $f(z)=z+1+e^{-z}$ and $g(z)=f(z)+2\pi i,$ unless otherwise mentioned.
We show  in the next example that in general, for two permutable entire functions $f$ and $g$ (where $f(z)=z+1+e^{-z}$ ), $I(f\circ g)$ may not be contained in $I(f)\cap I(g).$ The function $f$ appeared first in a paper of Fatou \cite{F2}. In this paper, Fatou discussed the iterative behavior of transcendental entire functions and  established some fundamental properties of the Fatou and Julia sets. He further posed some stimulating fundamental questions about the topological properties of the Julia set of a transcendental entire function which led to  further developments. 

\begin{example}\label{sec2,eg1}
Let $f(z)=z+1+e^{-z}$ and $g(z)=f(z)+2\pi i.$ It can be easily seen that $f\circ g=g\circ f.$ $F(f)$ has a \emph{Baker domain} containing the right half plane $\mathcal H_r=\{z\in\C: \Re(z)>0\}$ and so $f$ is not of bounded type. Observe that
\begin{equation}\label{eq1}
 (f\circ g)^n(z)=f^{2n}(z)+2n\pi i 
\end{equation}
and
\begin{equation}\label{eq2}
(g\circ f)^n(z)=f^{2n}(z)+2n\pi i
\end{equation}
Let $Fix(f)$ denote the fixed point set of $f.$ Then $Fix(f)=\{z\in\C: z=(2k+1)i\pi, k\in\Z\}.$ For each $z\in Fix(f), z\notin I(f).$  Also for  $z\in Fix(f),$ we have  
\begin{equation}\label{eq3}
\begin{split}
(f\circ g)^n(z)
&= (2k+1)i\pi+2n\pi i\\
&\to\ity\;\text{as}\;n\to\ity.
\end{split}
\end{equation}
which implies that $z\in I(f\circ g).$ Moreover, as $g^n(z)=f^n(z)+2n\pi i,$ we get for $z\in Fix(f), z\in I(g).$ Hence $I(f\circ g)\nsubseteq I(f)\cap I(g).$
\end{example}

We now establish some invariance properties of $I(f)$ and $I(g)$  for the pair of commuting functions $f(z)=z+1+e^{-z}$ and $g(z)=f(z)+2\pi i.$ Observe that for $l, p\in\N,$ $f$ satisfies $f^l(z+2p\pi i)=f^l(z)+2p\pi i.$  

\begin{proposition}\label{sec2,prop1}
For the pair of commuting functions $f(z)=z+1+e^{-z}$ and $g(z)=f(z)+2\pi i,$ $g(I(f))\subset I(f)$ and $f(I(g))\subset I(g).$
\end{proposition}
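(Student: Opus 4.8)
The plan is to reduce everything to two translation identities that follow from the $2\pi i$-periodicity of $e^{-z}$. First I would record that, since $e^{-(z+2\pi i)}=e^{-z}$, one has $f(z+2\pi i)=f(z)+2\pi i$, and hence by an immediate induction the relation $f^n(z+2p\pi i)=f^n(z)+2p\pi i$ already noted just before the statement. A second induction, using $g=f+2\pi i$, gives the closed form $g^n(z)=f^n(z)+2n\pi i$ for all $n\in\N$ (this is the identity underlying \eqref{eq1}). With these two identities in hand the proposition becomes a direct computation.

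For the inclusion $g(I(f))\subset I(f)$, I would take $w\in I(f)$ and simply track the $f$-orbit of $g(w)=f(w)+2\pi i$: using the translation identity, $f^n(g(w))=f^n(f(w)+2\pi i)=f^{n+1}(w)+2\pi i$. Since $w\in I(f)$ forces $f^{n+1}(w)\to\ity$, the added constant $2\pi i$ is harmless and $f^n(g(w))\to\ity$, so $g(w)\in I(f)$.

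For $f(I(g))\subset I(g)$ the same strategy applies but needs one extra twist. Taking $w\in I(g)$ and using $g^n(z)=f^n(z)+2n\pi i$, I would write $g^n(f(w))=f^n(f(w))+2n\pi i=f^{n+1}(w)+2n\pi i$. The temptation is to conclude from $w\in I(g)$ that $f^{n+1}(w)\to\ity$, but this is exactly the step that can fail: $g^n(w)=f^n(w)+2n\pi i\to\ity$ does not by itself control $f^n(w)$, since the divergence could come entirely from the $2n\pi i$ term. The hard (or at least the only non-routine) part is to sidestep this, and I would do so by rewriting $f^{n+1}(w)+2n\pi i=\bigl(f^{n+1}(w)+2(n+1)\pi i\bigr)-2\pi i=g^{n+1}(w)-2\pi i$, which turns the expression into a shifted tail of the escaping $g$-orbit of $w$. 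Since $g^{n+1}(w)\to\ity$, we obtain $g^n(f(w))\to\ity$ and thus $f(w)\in I(g)$, completing the proof.

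Finally, I would remark that, combined with Corollary \ref{sec2,cor1} (which gives $g(I(f))\supset I(f)$), the first inclusion in fact upgrades to the equality $g(I(f))=I(f)$ for this pair of maps.
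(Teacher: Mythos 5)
Your proof is correct and takes essentially the same approach as the paper: both inclusions come down to the same key identities, $f^n(g(w))=f^{n+1}(w)+2\pi i$ and $g^n(f(w))=g^{n+1}(w)-2\pi i$, after which escaping is preserved because the difference is a fixed constant. The only cosmetic difference is in how the second identity is obtained: the paper uses commutativity and $f=g-2\pi i$ directly (namely $g^k(f(w))=f(g^k(w))=g(g^k(w))-2\pi i$), whereas you route through the closed form $g^n(z)=f^n(z)+2n\pi i$ and regroup; the computations are equivalent, and your observation that one cannot deduce $f^{n+1}(w)\to\infty$ from $w\in I(g)$ correctly identifies why this regrouping is needed.
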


\begin{proof}
We first show that $g(I(f))\subset I(f).$ To this end, let $z\in I(f).$ Then for $n\in\N$
\begin{equation}\label{eq4}
\begin{split}
f^n(g(z))
&=f^n(f(z)+2\pi i)\\
&=f^{n+1}(z)+2\pi i\\
&\to\ity\;\text{as}\;n\to\ity
\end{split}
\end{equation}
which implies $g(z)\in I(f)$ and hence the result. We now show that $f(I(g))\subset I(g).$ Let $w\in I(g).$ Consider for $k\in\N$
\begin{equation}\label{eq5}
\begin{split}
g^k(f(w))
&=f(g^k(w))\\
&=g(g^k(w))-2\pi i\\
&\to\ity\;\text{as}\;k\to\ity
\end{split}
\end{equation}
which implies $f(w)\in I(g)$ and hence the result.
\end{proof}
\begin{remark}\label{sec2,rem1}
Using Corollary \ref{sec2,cor1} we get $I(f)$ is completely invariant under $g$ and $I(g)$ is completely invariant under $f.$ Also for each transcendental entire function $h,$ $I(h)$ is completely invariant under $h$ we get that $I(f)$ is completely invariant under $f\circ g.$ On similar lines, $I(g)$ is completely invariant under $f\circ g.$
\end{remark}
An immediate consequence of Proposition \ref{sec2,prop1} is
\begin{corollary}\label{sec2,cor2}
Let $f$ and $g$ be as in Proposition \ref{sec2,prop1}. Then $g\overline{(I(f))}\subset \overline{(I(f))}$ and $f\overline{(I(g))}\subset \overline{(I(g))}.$
\end{corollary}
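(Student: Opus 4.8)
The plan is to derive Corollary \ref{sec2,cor2} directly from the forward-invariance statements of Proposition \ref{sec2,prop1} by a routine topological continuity argument, exactly mirroring the proof of the earlier closure-extension theorem (the one showing $\overline{I(f)}\subset g(\overline{I(f)})$). The two assertions $g(\overline{I(f)})\subset\overline{I(f)}$ and $f(\overline{I(g)})\subset\overline{I(g)}$ are symmetric, so I would prove the first and remark that the second follows on identical lines with the roles of $f$ and $g$ interchanged.

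First I would take an arbitrary point $w\in\overline{I(f)}$ and produce a sequence $\{z_n\}\subset I(f)$ with $z_n\to w$ as $n\to\ity$, which exists by definition of closure. Next I would invoke Proposition \ref{sec2,prop1}, which gives $g(I(f))\subset I(f)$; hence each image point $g(z_n)$ lies in $I(f)$. The key step is then continuity: since $g(z)=z+1+e^{-z}+2\pi i$ is entire and therefore continuous on $\C$, we have $g(z_n)\to g(w)$ as $n\to\ity$. Because $g(z_n)\in I(f)$ for every $n$ and $g(z_n)\to g(w)$, the limit $g(w)$ is a limit of points of $I(f)$, so $g(w)\in\overline{I(f)}$. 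As $w$ was arbitrary, this yields $g(\overline{I(f)})\subset\overline{I(f)}$.

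The second inclusion $f(\overline{I(g)})\subset\overline{I(g)}$ is handled the same way: given $w\in\overline{I(g)}$, choose $\{z_n\}\subset I(g)$ with $z_n\to w$, apply the second half of Proposition \ref{sec2,prop1} (namely $f(I(g))\subset I(g)$) to conclude $f(z_n)\in I(g)$, and use continuity of the entire map $f$ to get $f(z_n)\to f(w)$, so that $f(w)\in\overline{I(g)}$.

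There is no genuine obstacle here; the statement is a formal consequence of forward invariance plus the elementary fact that a continuous map sends the closure of a set into the closure of its image, combined with $g(I(f))\subset I(f)$ which already places the images inside $I(f)$ rather than merely inside its closure. The only point meriting a word of care is that one should use the global continuity of the entire functions $f$ and $g$ (so no branch-selection issues arise, in contrast to the earlier closure result that invoked a continuous branch of $g^{-1}$); once that is noted, the argument is immediate.
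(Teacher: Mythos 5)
Your proof is correct and is exactly the argument the paper intends: the corollary is stated there as an ``immediate consequence'' of Proposition \ref{sec2,prop1}, the implicit reasoning being precisely your combination of forward invariance ($g(I(f))\subset I(f)$, $f(I(g))\subset I(g)$) with the continuity of the entire maps $g$ and $f$ to pass to closures. Your remark that no branch-selection issue arises here (unlike in the theorem on $\overline{I(f)}\subset g(\overline{I(f)})$) is also accurate.
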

It is not true in general that for an entire function $h$ if $F(h)\cap AV(h)=\emptyset,$ then all components of $F(h)$ are bounded. The entire function $f(z)=z+1+e^{-z}$ serves as an example for which $F(f)$ is disjoint from the set of asymptotic values $AV(f)$, but all components of $F(f)$ are unbounded. In fact, from Example \ref{sec2,eg1}, $F(f)$ has a Baker domain containing the right half plane $\mathcal H_r.$
Observe that the critical values of $f$ is the set $CV(f)=\{2+2k\pi i: k\in\Z\}$ so that Sing($f^{-1}$) is unbounded.

Recently  \cite[Theorem 1.2]{berg7}  a  characterization was given for boundedness of Fatou components of a hyperbolic entire function of bounded type. 
\begin{theorem}\cite[Theorem 1.2]{berg7}\label{sec2,thm6}
Let $h$ be a hyperbolic entire function of bounded type. Then the following are equivalent:
\begin{enumerate}
\item[(i)] every component of $F(h)$ is bounded;
\item[(ii)] h has no asymptotic values and every component of $F(h)$ contains at most finitely many critical points.
\end{enumerate}
\end{theorem}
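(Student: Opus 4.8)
The statement is an equivalence, and I would prove the two implications separately, relying throughout on the following structural facts about a hyperbolic $h\in\mathcal B$. Since $h$ is of bounded type we have $I(h)\subset J(h)$ \cite{EL}, so $F(h)$ contains no Baker domain; since $h$ is hyperbolic it admits no wandering domains and no rotation or parabolic domains (these would force singular orbits to accumulate on $J(h)$, contradicting $\mathcal P(h)\subset F(h)$), so every periodic component of $F(h)$ is the immediate basin of an attracting cycle and every component is eventually mapped into such a basin. Moreover $\text{Sing}(h^{-1})$ is bounded and contained in the compact set $\mathcal P(h)\subset F(h)$.

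For (i) $\Rightarrow$ (ii) I would establish each half by contraposition. The critical points of $h$ are the zeros of the entire function $h'$, which is not identically zero; a bounded component $U$ has compact closure, and a compact set meets the isolated zero set of $h'$ in only finitely many points, so (i) immediately forces each component to contain finitely many critical points. For the asymptotic values, suppose $a$ is a (finite) asymptotic value. Then $a\in\text{Sing}(h^{-1})\subset\mathcal P(h)\subset F(h)$, so $a$ lies in some component $U$ of $F(h)$. Choosing a disc $D\subset U$ about $a$, the transcendental singularity over $a$ provides an unbounded component $T$ of $h^{-1}(D)$ (it contains the tail of an asymptotic path on which $h\to a$), on which $h$ is a covering onto $D\setminus\{a\}$. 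By complete invariance of $F(h)$ we have $T\subset h^{-1}(U)\subset F(h)$, and since $T$ is unbounded it lies in an unbounded Fatou component, contradicting (i).

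The implication (ii) $\Rightarrow$ (i) is the substantial direction, and I would again argue contrapositively: assuming some component $U_0$ of $F(h)$ is unbounded, I would produce either a finite asymptotic value or a component carrying infinitely many critical points. The tool is the logarithmic change of variable of Eremenko--Lyubich \cite{EL}: fixing $R$ with $\text{Sing}(h^{-1})\subset\{|w|<R\}$ and passing to logarithmic coordinates, one analyses the tracts over $\{|w|>R\}$ and the behaviour of $h$ on the unbounded ends of $U_0$. The two finiteness hypotheses in (ii) are precisely what is needed to force the restriction $h\colon U\to h(U)$ on each Fatou component to be a proper map of finite degree onto the component containing its image. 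From there I would propagate boundedness: a periodic component is an immediate attracting basin, and hyperbolicity (uniform expansion of $h$ in the hyperbolic metric of $\C\setminus\mathcal P(h)$ on a neighbourhood of $J(h)$) keeps this basin bounded; finite proper degree then makes preimages of bounded components bounded, and an inductive pullback along the preperiodic structure yields boundedness of every component.

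The principal obstacle lies exactly in the step that links the two finiteness conditions to properness and finite degree of $h$ on each component. One must rule out that an unbounded component $U$ contains an unbounded sub-tract on which $h$ stays bounded — which would manufacture a finite asymptotic value — or accumulates infinitely many branch points, contradicting the critical-point hypothesis; conversely the properness so obtained must be shown compatible with boundedness of the attracting basins. Controlling the interaction between the unbounded geometry of $U$ near infinity and the compactness of $\mathcal P(h)$ is where the quantitative hyperbolic-geometry estimates in logarithmic coordinates carry the real weight, and where hypothesis (ii) is genuinely used; I expect this to be the hardest part of the argument.
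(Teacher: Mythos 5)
First, a point of order: the paper you are being compared against does not prove this statement at all --- it is quoted verbatim as Theorem 1.2 of Bergweiler--Fagella--Rempe \cite{berg7}, cited as background for the discussion of $f(z)=z+1+e^{-z}$, and no proof appears in the text. So your proposal can only be measured against the original argument in \cite{berg7}, which is a substantial, paper-length piece of work. Your direction (i) $\Rightarrow$ (ii) is essentially correct and matches the standard reasoning: critical points are isolated zeros of $h'$, so a bounded component (having compact closure) contains only finitely many of them; and a finite asymptotic value $a$ lies in $\mathcal P(h)\subset F(h)$, so an unbounded component of $h^{-1}(D)$, for a small disc $D$ about $a$ inside $F(h)$, lies in $F(h)$ and forces an unbounded Fatou component.

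The genuine gap is in (ii) $\Rightarrow$ (i), and it sits exactly where you admit uncertainty. Two steps are asserted rather than proved. First, the claim that an unbounded piece of a Fatou component on which $h$ stays bounded ``would manufacture a finite asymptotic value'' is not automatic: boundedness of $h$ on an unbounded connected set yields accumulation values, not a limit along a path to infinity. What is actually needed is the covering lemma of \cite{berg7}: if $D$ is a bounded Jordan domain and $V$ an unbounded component of $h^{-1}(D)$, then either $\overline{D}$ contains an asymptotic value of $h$ or $V$ contains infinitely many critical points; its proof is itself nontrivial. Second, and more seriously, ``hyperbolicity \ldots keeps this basin bounded'' begs the question. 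Granting the covering lemma and hypothesis (ii), induction does show that each pullback $U_n$ (the component of $h^{-n}(D)$ containing the attracting periodic point) is bounded; but the immediate basin is the increasing union $\bigcup_n U_n$, and an increasing union of bounded domains can perfectly well be unbounded. Converting the two finiteness hypotheses into a \emph{uniform} bound on this union is the real content of the theorem; in \cite{berg7} this occupies the bulk of the paper and is done by proving that invariant Fatou components of hyperbolic maps satisfying (ii) are in fact quasidiscs, using uniform expansion of the hyperbolic metric of $\C\setminus\mathcal P(h)$ together with control of the degrees of $h^n\colon U_n\to D$. Your proposal defers precisely this step (``I expect this to be the hardest part''), so as it stands it is an accurate outline of where the difficulty lies, not a proof.
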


The entire functions $f$ and $g$ serves as an example of a pair of commuting functions which satisfies $Fix(f)\cap I(g)\neq\emptyset$ where $Fix(f)$ denotes fixed point set of $f.$ As computed in Example \ref{sec2,eg1}, $Fix(f)=\{z\in\C: z=(2k+1)i\pi, k\in\Z\}.$ We assert that $Fix(f)\subset I(g).$ To this end, let $z\in Fix(f).$ Then $z=(2m+1)i\pi$ for some $m\in\Z.$ Consider 
\begin{equation}\label{eq6}
\begin{split}
g^n(z)
&=f^n(z)+2n\pi i\\
&=(2m+1)i\pi+2n\pi i\\
&\to\ity\;\text{as}\;n\to\ity
\end{split}
\end{equation}
so that $z\in I(g)$ which proves the assertion and hence the result.

It can be easily seen that for an entire function $h,$ $int(I(h))\subset F(h)$ where $int(S)$ denotes interior of a subset $S$ of a topological space $X.$ For obvious reasons, if an entire function $h$ is of bounded type then $int(I(h))=\emptyset.$ However, if $h\notin\mathcal B$ then $int (I(h))$ can be non empty. Consider $f(z)=z+1+e^{-z}.$ As observed in Example \ref{sec2,eg1}, $F(f)$ has a Baker domain containing the right half plane $\mathcal H_r.$ In particular, $\mathcal H_r\subset I(f)$ and hence $int(I(f))\neq\emptyset.$ This fact was also observed in \cite{e1}.

Recall that the order of a transcendental entire function $h$ is defined as 
\[\rho_h=\limsup_{r\to\ity}\frac{\log \log M(r, h)}{\log r}\]
where 
\[M(r, h)=\max\{|h(z)|: |z|\leq r\}.\]
For an entire function of finite order the number of critical values and asymptotic values are related. This is expressed in the following result

\begin{lemma}\cite[Corollary 3]{berg8}\label{sec2,lem1}
If an entire function $h$ of finite order $\rho_h$  has only finitely many critical values, then it has at most $2\rho$ asymptotic values.
\end{lemma}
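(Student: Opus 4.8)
The plan is to reduce the statement to two classical pillars of the value-distribution theory of the inverse function: the structure theory of transcendental singularities of $h^{-1}$ and the Denjoy--Carleman--Ahlfors theorem. First I would recall that every finite asymptotic value of $h$ is the projection of at least one \emph{transcendental singularity} of the inverse $h^{-1}$, and that each such singularity is classified as either \emph{direct} (there is a tract-neighbourhood of the singularity on which $h$ omits the value in question) or \emph{indirect} (every such neighbourhood contains points mapped to that value). Since a singularity lies over exactly one point of the plane, distinct asymptotic values are projections of distinct singularities; hence it suffices to bound the number of singularities that the asymptotic values can produce.

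The key reduction is the claim that, under the hypothesis that $h$ has only finitely many critical values, $h^{-1}$ admits no indirect singularities at all. I would establish this from the structural fact that an indirect singularity lying over a point $a$ forces $a$ to be an accumulation point of the set $CV(h)$ of critical values: near an indirect singularity the value $a$ is assumed infinitely often, and analysing the proper (branched-covering) restriction of $h$ to the shrinking tracts over $a$ produces critical points whose critical values cluster at $a$. Since $CV(h)$ is finite it has no accumulation points, so no indirect singularity can exist. Consequently every asymptotic value of $h$ is the projection of at least one \emph{direct} singularity, and different asymptotic values give genuinely different direct singularities.

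It then remains only to bound the number of direct singularities. Here I would invoke the Denjoy--Carleman--Ahlfors theorem in the sharp form asserting that an entire function of finite order $\rho$ has at most $2\rho$ direct transcendental singularities of its inverse. Combining this with the previous paragraph, the number of asymptotic values is at most the number of direct singularities, which is at most $2\rho$, which is the stated conclusion.

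The main obstacle is not the bookkeeping but the two deep inputs, both of which I would quote rather than reprove. The Denjoy--Carleman--Ahlfors bound is itself the resolution of Denjoy's conjecture and lies well beyond a short argument. Likewise, the precise dichotomy between direct and indirect singularities and the fact that an indirect singularity can only lie over an accumulation point of critical values belong to the finer theory of singularities of the inverse function (developed by Bergweiler and Eremenko); the delicate point is justifying that the tract over an indirect singularity behaves like a finite branched cover, so that genuine critical values are forced to accumulate. Once this machinery is granted, the passage from asymptotic values to direct singularities and the verification that distinct values yield distinct singularities are entirely routine.
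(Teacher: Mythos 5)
First, a point of order: the paper does not prove this lemma at all --- it is quoted, with citation, as Corollary~3 of Bergweiler--Eremenko (\emph{On the singularities of the inverse to a meromorphic function of finite order}), so the only meaningful comparison is with the proof in that source. Your architecture is exactly theirs: (a) distinct finite asymptotic values project from distinct transcendental singularities of $h^{-1}$; (b) finitely many critical values exclude indirect singularities; (c) the Denjoy--Carleman--Ahlfors theorem bounds the number of direct singularities of a finite-order function by $2\rho$. So the skeleton is right, and steps (a), (c) and the final count are fine (modulo the small point that one should count \emph{finite} asymptotic values against direct singularities over finite points; $\infty$ is always a direct asymptotic value of an entire function, and indeed the paper's transcription of the corollary drops the word ``finite'' and writes $2\rho$ for $2\rho_h$).

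The genuine gap is in step (b), and it is not a removable technicality. You assert, as an order-free ``structural fact,'' that an indirect singularity over $a$ forces $a$ to be an accumulation point of $CV(h)$, to be proved by ``analysing the proper (branched-covering) restriction of $h$ to the shrinking tracts over $a$.'' No such proper restriction exists: if $h$ restricted to a tract $U(r)$ were proper onto $D(a,r)$, it would be a finite branched covering, every nested component $U(r')\subset U(r)$ would contain one of the finitely many preimages of $a$ in $U(r)$, and hence $\bigcap_{r'>0}U(r')\neq\emptyset$ --- i.e.\ the singularity would be an ordinary (algebraic) point rather than a transcendental one. Failure of properness on every tract is precisely what it means for the singularity to be transcendental, so the mechanism you invoke to make critical values cluster at $a$ cannot be set in motion. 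What you are trying to prove here is exactly Theorem~2 of Bergweiler--Eremenko (indirect singularities of $h^{-1}$ lie over limit points of the critical values), and in that theorem the finite-order hypothesis is essential, not decorative: their argument is a genuine growth estimate using finite order, and the conclusion is not available for entire functions of infinite order, which can have indirect singularities without nearby critical values. Your sketch uses finite order only in the DCA step (c), so the hypothesis never reaches the step where it is equally needed. The repair is simple: replace your ``structural fact'' and its branched-covering justification by an explicit appeal to Bergweiler--Eremenko's Theorem~2 for finite-order functions; with that substitution your proof closes up and coincides with the cited one.
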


However, the function $f(z)=z+1+e^{-z}$ serves as an example of an entire function of finite order viz. $1$ having infinitely many critical values (as computed before $CV(f)=\{2+2k\pi i: k\in\Z\}$) but no finite asymptotic value (or at most 2 asymptotic value), so that the conclusion of Lemma \ref{sec2,lem1} still holds for an entire function of finite order having  infinitely many critical values.



\end{document}